\newtheorem{proposition}{Proposition}[section]
\newtheorem{lemma}[proposition]{Lemma}
\newtheorem{corollary}[proposition]{Corollary}
\newtheorem{theorem}[proposition]{Theorem}
\theoremstyle{definition}
\newtheorem{example}[proposition]{Example}
\newtheorem{remarks}[proposition]{Remarks}
\newcommand{\thlabel}[1]{\label{th:#1}}
\newcommand{\thref}[1]{Theorem~\ref{th:#1}}
\newcommand{\selabel}[1]{\label{se:#1}}
\newcommand{\lelabel}[1]{\label{le:#1}}
\newcommand{\leref}[1]{Lemma~\ref{le:#1}}
\newcommand{\prlabel}[1]{\label{pr:#1}}
\newcommand{\prref}[1]{Proposition~\ref{pr:#1}}
\newcommand{\colabel}[1]{\label{co:#1}}
\newcommand{\relabel}[1]{\label{re:#1}}
\newcommand{\exlabel}[1]{\label{ex:#1}}
\newcommand{\exref}[1]{Example~\ref{ex:#1}}
\newcommand{\eqlabel}[1]{\label{eq:#1}}
\newcommand{\equref}[1]{(\ref{eq:#1})}
\newcommand{\Hom}{{\rm Hom}}
\newcommand{\End}{{\rm End}}
\newcommand{\im}{{\rm Im}\,}
\def\ot{\otimes}
\def\ZZ{{\mathbb Z}}
\def\QQ{{\mathbb Q}}
\newcommand{\Cc}{\mathcal{C}}
\newcommand{\Dd}{\mathcal{D}}
\newcommand{\Ff}{\mathcal{F}}
\newcommand{\Mm}{\mathcal{M}}
\newcommand{\Pp}{\mathcal{P}}
\def\*C{{}^*\hspace*{-1pt}{\Cc}}
\def\text#1{{\rm {\rm #1}}}
\def\ul{\underline}
\def\equal#1{\smash{\mathop{=}\limits^{#1}}}
\begin{document}

\title[Braidings on the category of bimodules]
{Braidings on the category of bimodules, Azumaya algebras and epimorphisms of rings}

\author{A. L. Agore}
\address{Faculty of Engineering, Vrije Universiteit Brussel, Pleinlaan 2, B-1050 Brussels, Belgium}
\email{ana.agore@vub.ac.be and ana.agore@gmail.com}

\author{S. Caenepeel}
\address{Faculty of Engineering, Vrije Universiteit Brussel, Pleinlaan 2, B-1050 Brussels, Belgium}
\email{scaenepe@vub.ac.be}

\author{G. Militaru}
\address{Faculty of Mathematics and Computer Science, University of Bucharest, Str.
Academiei 14, RO-010014 Bucharest 1, Romania}
\email{gigel.militaru@fmi.unibuc.ro and gigel.militaru@gmail.com}
\subjclass[2010]{16T10, 16T05, 16S40}

\keywords{braided category, epimorphism of rings, Azumaya algebra,
separable functor, quantum Yang-Baxter equation.}

\thanks{A.L. Agore is Aspirant fellow of FWO-Vlaanderen. S.Caenepeel is supported by
FWO project G.0117.10 ``Equivariant Brauer groups and Galois
deformations''. A.L. Agore and G. Militaru are supported by the
CNCS - UEFISCDI grant no. 88/05.10.2011 ``Hopf algebras and
related topics''.}

\begin{abstract}
Let $A$ be an algebra over a commutative ring $k$. We prove that
braidings on the category of $A$-bimodules are in bijective
correspondence to canonical R-matrices, these are elements in
$A\ot A\ot A$ satisfying certain axioms. We show that all
braidings are symmetries. If $A$ is commutative, then there exists
a braiding on ${}_A\Mm_A$ if and only if $k\to A$ is an
epimorphism in the category of rings, and then the corresponding
$R$-matrix is trivial. If the invariants functor $G = (-)^A:\
{}_A\Mm_A\to \Mm_k$ is separable, then $A$ admits a canonical
R-matrix; in particular, any Azumaya algebra admits a canonical
R-matrix. Working over a field, we find a remarkable new
characterization of central simple algebras: these are precisely
the finite dimensional algebras that admit a canonical R-matrix.
Canonical R-matrices give rise to a new class of examples of
simultaneous solutions for the quantum Yang-Baxter equation and
the braid equation.
\end{abstract}

\maketitle

\section*{Introduction}
Braided monoidal categories play a key role in several areas of
mathematics like quantum groups, noncommutative geometry, knot
theory, quantum field theory and 3-manifolds. It is well-known
that the category ${}_A\Mm_A$ of bimodules over an algebra $A$
over a commutative ring $k$ is monoidal. The aim of this paper is
to give an answer to the following natural question: given an
algebra $A$, describe all braidings on ${}_A\Mm_A$.  Besides the
purely categorical significance this problem is also relevant in
noncommutative geometry where braidings on ${}_A\Mm_A$ are used to
develop the theory of wedge products of differential forms or
connections on bimodules. The question is not as obvious as it
seems: a first attempt might be to use the switch map to define
the braiding, but this is not well-defined, even in the case when
$A$ is a commutative algebra. However, there are non-trivial
examples of braidings on the category of bimodules. For example,
let $A=\Mm_n (k)$ be a matrix algebra; then $c_{M,N}:\ M\ot_A N\to
N\ot_AM$ given by the formula
$$c_{M, N} (m\ot_A n) =
\sum_{i,j,t=1}^n e_{ij} \, n \, e_{ti}\ot_A \, m \, e_{jt}
$$
is a braiding on the category of $A$-bimodules (see \exref{2.5}
for full detail). A first general result is \thref{1.1}, stating
that braidings on the category of $A$-bimodules are in bijective
correspondence with canonical R-matrices, these are invertible
elements $R$ in the threefold tensor product $A\ot A\ot A$,
satisfying a list of axioms. In this situation, we will say that
$(A,R)$ is an algebra with a canonical R-matrix. Actually, this
result is inspired by a classical result of Hopf algebras:
braidings on the category of (left) modules over a bialgebra $H$
are in one-to-one correspondence with quasitriangular structures
on $H$, these are elements $R$ in the two-fold tensor product
$H\ot H$ satisfying certain properties. We refer to \cite[Theorem
10.4.2]{Montgomery} for detail. The next step is to reduce the
list of axioms to two equations, a centralizing condition and a
normalizing condition, and then we can prove in  \thref{2.1} that
all braidings on a category of bimodules are symmetries. In the
situation where $A$ is commutative, we have a complete
classification: $A$ admits a canonical R-matrix $R$ if and only if
$k\to A$ is an
epimorphism in the category of rings, and then $R$ is trivial, see \prref{1.2}.\\
The invariants functor $G = (-)^A:\ {}_A\Mm_A\to \Mm_k$ has a left
adjoint $F = A\ot -$. We prove that $G$ is a separable functor
\cite{NastasescuVV89,Rafael} if and only if $G$ is fully faithful
and this implies that $A$ admits a canonical R-matrix. The converse property
also holds if $A$ is free as a $k$-module, and then the braiding on the category
of $A$-bimodules is unique, cf. \thref{2.2}.\\
Azumaya algebras were introduced in \cite{AG} under the
name central separable algebras; a more restrictive class was
considered earlier by Azumaya in \cite{Az}. Azumaya algebras are
the proper generalization of central simple algebras to
commutative rings. The Brauer group consists of the set of Morita
equivalence classes of Azumaya algebras. There exists a large
literature on Azumaya algebras and the Brauer group, see for
example the reference list in \cite{stef}. $A$ is an Azumaya
algebra if and only if $G$ is an equivalence of categories, and
then $G$ is separable. Therefore the category of bimodules over an
Azumaya algebra is braided monoidal, that is any Azumaya algebra
admits a canonical R-matrix. $R$ can be described explicitly in the cases
where $A$ is a matrix algebra or a quaternion algebra, see Examples
\ref{ex:2.5} and \ref{ex:2.6}. Not every algebra with a canonical R-matrix
is Azumaya; for example $\QQ$ is not a $\ZZ$-Azumaya algebra, but
$1\ot 1\ot 1$ is a canonical R-matrix, since $\ZZ\to \QQ$ is an epimorphism of rings.
Thus algebras with a canonical R-matrix can be viewed as generalizations of
Azumaya algebras.\\
Applying \thref{2.2} to finite dimensional algebras over fields, we obtain a new
characterization of central simple algebras, namely central simple algebras are
the finite dimensional algebras admitting a canonical R-matrix. As a final application,
we construct a simultaneous solution of the quantum
Yang-Baxter equation and the braid equation from any canonical R-matrix, see
\thref{qybe}.

\section{Preliminary}\selabel{0}
\subsection*{Azumaya algebras}\selabel{0.2}
Let $k$ be a commutative ring and $A$ a $k$-algebra.
Unadorned $\ot$ means $\ot_k$. ${}_A\Mm_A$ is
the $k$-linear category of $A$-bimodules.
It is well-known that we
have a pair of adjoint functors $(F, G)$ between the category of
$k$-modules $\Mm_k$ and the category of $A$-bimodules ${}_A\Mm_A$.
For a $k$-module $N$, $F(N)=A\ot N$, with $A$-bimodule structure
$a(b\ot n)c=abc\ot n$, for all $a$, $b$, $c\in A$ and $n\in N$.
For an $A$-bimodule $M$, $G(M) = M^A = \{ m\in M \, | \, am = ma,
\forall a \in A \} \cong {}_A\Hom_A (A, M)$. The unit $\eta$ and
the counit $\varepsilon$ of the adjoint pair $(F, G)$ are given by
the formulas
 $$\begin{array}{ccc}
 \eta_N:\ N\to (A\ot N)^A&;&\eta_N(n)=1\ot n;\\
 \varepsilon_M:\ A\ot M^A\to M&;&\varepsilon_M(a\ot m) = am = ma
 \end{array}$$
for all $n\in N$, $a\in A$ and $m\in M^A$. Recall that $A$ is an
Azumaya algebra if $A$ is faithfully projective as a $k$-module,
that is, $A$ is finitely generated, projective and faithful, and
the algebra map
 \begin{equation}\eqlabel{azumaya}
 F:\ A^e=A\ot A^{\rm op}\to \End_k(A),~~F(a\ot b)(x)=axb
 \end{equation}
is an isomorphism. Azumaya algebras can be characterized in
several ways; perhaps the most natural characterization is the
following: $A$ is an Azumaya algebra if and only if the adjoint
pair $(F,G)$ is a pair of inverse equivalences, see \cite[Theorem
III.5.1]{KO}. Another characterization is that $A$ is central and separable
as a $k$-algebra.

\subsection*{Separable functors}\selabel{0.3}
Recall from \cite{NastasescuVV89} that a covariant functor $F:\
\Cc\to \Dd$ is called separable if the natural transformation
$$\Ff:\
\Hom_\Cc(\bullet,\bullet)\to\Hom_\Dd(F(\bullet),F(\bullet))~;~
\Ff_{C,C'}(f)=F(f)$$ splits, that is, there is a natural
transformation
$$\Pp:\
\Hom_\Dd(F(\bullet),F(\bullet))\to \Hom_\Cc(\bullet,\bullet)$$
such that $\Pp\circ \Ff$ is the identity natural transformation.
Rafael's Theorem \cite{Rafael} states that the left adjoint $F$ in
an adjoint pair of functors $(F,G)$ is separable if and only if
the unit of the adjunction $\eta : 1_{\Cc} \to GF$ splits; the
right adjoint $G$ is separable if and only if the counit
$\varepsilon: FG \to 1_{\Dd}$  cosplits, that is, there exists a
natural transformation $\zeta: 1_{\Dd} \rightarrow FG$ such that
$\varepsilon\circ\zeta$ is the identity natural transformation. A
detailed study of separable functors can be found in \cite{book}.

\subsection*{Braided monoidal categories}\selabel{0.1}
A monoidal category $\Cc=(\Cc,\ot, I,a,l,r)$ consists of a
category $\Cc$, a functor $\ot:\ \Cc\times\Cc\to \Cc$, called the
tensor product, an object $I\in \Cc$ called the unit object, and
natural isomorphisms $a:\ \ot\circ (\ot\times \Cc)\to \ot\circ
(\Cc\times \ot)$ (the associativity constraint), $l:\ \ot\circ
(I\times \Cc)\to \Cc$ (the left unit constraint) and $r:\ \ot\circ
(\Cc\times I)\to \Cc$ (the right unit constraint). $a$, $l$ and
$r$ have to satisfy certain coherence conditions, we refer to
\cite[XI.2]{K} for a detailed discussion. $\Cc$ is called strict
if $a$, $l$ and $r$ are the identities on $\Cc$. McLane's
Coherence Theorem asserts that every monoidal category is monoidal
equivalent to a strict one, see \cite[XI.5]{K}. The categories
that we will consider are not strict, but they can be
treated as if they were strict.\\
Let $\tau:\ \Cc\times \Cc\to\Cc\times \Cc$ be the flip functor. A prebraiding on $\Cc$
is a natural transformation $c:\ \ot\to \ot\circ \tau$ satisfying the following equations,
for all $U,V,W\in \Cc$:
$$c_{U,V\ot W}=(V\ot c_{U,W})\circ (c_{U,V}\ot W)~~;~~
c_{U\ot V,W}=(c_{U,W}\ot V)\circ (U\ot c_{V,W}).$$
$c$ is called a braiding if it is a natural isomorphism.
$c$ is called a symmetry if $c_{U,V}^{-1}=c_{V,U}$, for all $U,V\in \Cc$. We refer to
\cite[XIII.1]{K} for more detail.

\section{Braidings on the category of bimodules}\selabel{1}
Let $A$ be an algebra over a commutative ring $k$ and ${}_A\Mm_A =
({}_A\Mm_A, - \ot_A - , A)$ the monoidal category of
$A$-bimodules. $A^{(n)}$ will be a shorter notation for the
$n$-fold tensor product $A\ot\cdots\ot A$, where $\ot = \ot_k$. An
element $R \in A^{(3)}$ will be denoted by $R = R^1 \ot R^2 \ot
R^3$, where summation is implicitly understood. Our first aim is
to investigate braidings on ${}_A\Mm_A$.

\begin{theorem}\thlabel{1.1}
Let $A$ be a $k$-algebra. Then there is a bijective correspondence
between the class of all braidings $c$ on ${}_A\Mm_A$ and the set
of all invertible elements $R=R^1\ot R^2\ot R^3\in A^{(3)}$
satisfying the following conditions, for all $a\in A$:
\begin{eqnarray}
R^1\ot R^2\ot aR^3&=&R^1a\ot R^2\ot R^3\eqlabel{1.1.2}\\
aR^1\ot R^2\ot R^3&=&R^1\ot R^2a\ot R^3\eqlabel{1.1.3}\\
R^1\ot aR^2\ot R^3&=&R^1\ot R^2\ot R^3a\eqlabel{1.1.4}\\
R^1\ot R^2\ot 1\ot R^3&=&r^1R^1\ot r^2\ot r^3R^2\ot R^3\eqlabel{1.1.5}\\
R^1\ot 1\ot R^2\ot R^3&=&R^1\ot R^2r^1\ot r^2\ot R^3r^3\eqlabel{1.1.6}
\end{eqnarray}
where $ r = r^1 \ot r^2 \ot r^3 = R$. Under the above
correspondence the braiding $c$ corresponding to $R$ is given by
the formula
\begin{equation}\eqlabel{1.1.1}
c_{M, N}: M \ot_A N \to N\ot_A M, \quad c_{M,N}(m\ot_A n) =
R^1nR^2\ot_A mR^3
\end{equation}
for all $M$, $N \in {}_A\Mm_A$, $m\in M$ and $n\in N$.\\
An invertible element $R \in A^{(3)}$ satisfying
\equref{1.1.2}-\equref{1.1.6} is called a canonical R-matrix and
$(A, R)$ is called an algebra with a canonical R-matrix.
\end{theorem}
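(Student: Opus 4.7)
The strategy exploits the fact that $A \ot A$ is the free $A$-bimodule on one generator, with universal property $\Hom_{A, A}(A \ot A, M) \cong M$ via $\phi \leftrightarrow \phi(1 \ot 1)$; concretely each $m \in M$ is the image of $1 \ot 1$ under the bimodule map $\bar m: A \ot A \to M$, $x \ot y \mapsto xmy$. Combined with the bimodule isomorphism $(A \ot A) \ot_A (A \ot A) \cong A^{(3)}$ given by $(x \ot y) \ot_A (u \ot v) \mapsto x \ot yu \ot v$, this makes $A^{(3)}$ the natural receptacle for the R-matrix.

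For the forward direction, given a braiding $c$ I will define $R = R^1 \ot R^2 \ot R^3 \in A^{(3)}$ as the image of $(1 \ot 1) \ot_A (1 \ot 1)$ under $c_{A \ot A, A \ot A}$, read through the identification above. Naturality of $c$ along $\bar m \ot_A \bar n$ forces the formula \equref{1.1.1} on arbitrary bimodules. The five axioms are then extracted by specializing to the test case $M = N = A \ot A$, $m = n = 1 \ot 1$: well-definedness of $c_{M, N}$ on the balanced tensor product yields \equref{1.1.2}, left $A$-linearity gives \equref{1.1.3}, right $A$-linearity gives \equref{1.1.4}, and applying each hexagon axiom with $U = V = W = A \ot A$ and all inputs $1 \ot 1$ (after identifying the triple bimodule tensor product with $A^{(4)}$) yields \equref{1.1.5} and \equref{1.1.6}. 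Invertibility of $R$ in the algebra $A^{(3)}$ is deduced from invertibility of $c_{A \ot A, A \ot A}$, by extracting an analogous element from the inverse braiding and translating the identity $c^{-1} \circ c = \Id$ into an algebraic relation in $A^{(3)}$. For the reverse direction, given $R$ satisfying the axioms I will define $c_{M, N}$ by \equref{1.1.1} and run the analogous checks in reverse: \equref{1.1.2} ensures the formula factors through $M \ot_A N$, \equref{1.1.3} and \equref{1.1.4} yield $A$-bilinearity, naturality is automatic from the formula, the two hexagons come from \equref{1.1.5} and \equref{1.1.6}, and invertibility of each $c_{M, N}$ is transferred from that of $R$.

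The principal obstacle I expect is the hexagon computation: the raw equality produced by either hexagon in the test case matches the stated equation \equref{1.1.5} (respectively \equref{1.1.6}) only after one application of the centralizing conditions \equref{1.1.2}-\equref{1.1.4} to shift an inner $R$-factor across a single tensor slot. A secondary bookkeeping difficulty is to fix a consistent identification of the iterated $\ot_A$-products with $A^{(n)}$ that is compatible with the outer bimodule structure; once that choice is made, every remaining verification reduces to routine manipulation of the expression $R^1 n R^2 \ot_A m R^3$.
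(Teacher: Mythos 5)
Your proposal is correct and follows essentially the same route as the paper's proof: extract $R=c_{A^{(2)},A^{(2)}}(1\ot 1\ot 1)$ via the identification $A^{(2)}\ot_A A^{(2)}\cong A^{(3)}$, force \equref{1.1.1} by naturality along the maps $x\ot y\mapsto xmy$, read off \equref{1.1.2}--\equref{1.1.4} from balancedness and bimodule linearity, obtain \equref{1.1.5}--\equref{1.1.6} from the hexagons after one correction by \equref{1.1.4}, get invertibility of $R$ from the element attached to $c^{-1}$, and verify the converse directly from the formula. The only detail left implicit, matching the paper, is that the inverse extracted from $c^{-1}$ becomes the inverse of $R$ in $A^{(3)}$ only after permuting its first two legs, which again uses the centralizing conditions.
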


\begin{proof} $A^{(2)}$ is an $A$-bimodule via the usual actions
$ a (x \ot y) b = ax \ot yb $, for all $a$, $b$, $x$, $y\in A$.
Let $c:\ {}_A\Mm_A\times  {}_A\Mm_A\to
 {}_A\Mm_A\times  {}_A\Mm_A$ be a braiding on ${}_A\Mm_A$. For each $M,N\in  {}_A\Mm_A$,
 we have an $A$-bimodule isomorphism $c_{M,N}:\ M\ot_A N\to N\ot_A M$, that is
 natural in $M$ and $N$. Now consider
 $$c_{A^{(2)},A^{(2)}}:\ A^{(3)}\cong A^{(2)}\ot_A A^{(2)}\to
 A^{(3)}\cong A^{(2)}\ot_A A^{(2)},$$
 and let $R=c_{A^{(2)},A^{(2)}}(1\ot 1\ot 1)$. $c$ is completely determined by $R$.
 For $M,N\in {}_A\Mm_A$, $m\in M$ and $n\in N$, we consider the
 $A$-bimodule maps $f_m:\ A^{(2)}\to M$ and $g_n:\ A^{(2)}\to N$ given by the
 formulas $f_m(a\ot b)=amb$ and $g_n(a\ot b)=anb$. From the naturality of $c$,
 it follows that
 $$(g_n\ot_A f_m)\circ c_{A^{(2)},A^{(2)}}= c_{M,N}\circ (f_m\ot_A g_n).$$
 \equref{1.1.1} follows after we evaluate this formula
at $1\ot 1\ot 1$.
Obviously $c_{M,N}(ma\ot_A n)=c_{M,N}(m\ot_A
an)$. Furthermore,
  $c_{M,N}(am\ot_A n)=ac_{M,N}(a\ot_A n)$ and $c_{M,N}(m\ot_A na)=c_{M,N}(a\ot_A n)a$
  since $c_{M,N}$ is a bimodule map. If we write these three formulas down in the case
  where $M=N=A^{(2)}$, and $m=n=1\ot 1$, then we obtain (\ref{eq:1.1.2}-\ref{eq:1.1.4}).
  $c$ satisfies the two triangle equalities
  \begin{eqnarray*}
  c_{M\ot_AN,P}&=&(c_{M,P}\ot_A N)\circ (M\ot_A c_{N,P});\\
  c_{M,N\ot_AP}&=&(N\ot_A c_{M,P})\circ (c_{M,N}\ot_AP).
  \end{eqnarray*}
  The first equality is equivalent to
  $$R^1pR^2\ot_A m\ot_A nR^3=r^1R^1pR^2r^2\ot_A mr^3\ot_A nR^3,$$
  for all $m\in M$, $n\in N$ and $p\in P$. If we take $M=N=P=A^{(2)}$
  and $m=n=p=1\ot 1$, then we find that
  $R^1\ot R^2\ot 1\ot R^3=r^1R^1\ot R^2r^2\ot r^3\ot R^3$.
  Applying \equref{1.1.4}, we find that \equref{1.1.5} holds. In a similar way, the second triangle
  equality implies \equref{1.1.6}.\\
  We can apply the same arguments to the inverse braiding $c^{-1}$.
We set $S=S^1\ot S^2\ot S^3=c^{-1}_{A^{(2)},A^{(2)}}(1\ot 1\ot
1)$. Then we have that
 $$
  m\ot_A n= (c^{-1}_{N,M}\circ c_{M,N})(m\ot_A n)
  = c^{-1}_{N,M}(R^1nR^2\ot_A mR^3)
= S^1mR^3S^2\ot_A R^1nR^2S^3.$$

Now take $m=n=1\ot 1\in A^{(2)}$. Then we find
  \begin{eqnarray*}
 && \hspace*{-15mm}
  1\ot1\ot1=
  S^1\ot R^3S^2R^1\ot R^2S^3
  \equal{\equref{1.1.3}}
   R^1S^1\ot R^3S^2\ot R^2S^3\\
 & \equal{\equref{1.1.4}}&
   R^1S^1\ot S^2\ot R^2S^3R^3
  \equal{\equref{1.1.2}}
   R^1S^1R^2\ot S^2\ot S^3R^3
   \equal{\equref{1.1.3}}
   S^1R^2\ot S^2R^1\ot S^3R^3.
   \end{eqnarray*}
   In a similar way, we have that $R^1S^2\ot R^2S^1\ot R^3S^3=1\ot 1\ot 1$, and it follows that
   $S^2\ot S^1\ot S^3$ is the inverse of $R^1\ot R^2\ot R^3$.\\
   Conversely, assume that $R\in A^{(3)}$ is invertible and satisfies (\ref{eq:1.1.2}-\ref{eq:1.1.6}).
   Then we define $c$ using \equref{1.1.1}. Straightforward computations show that $c$ is
   a braiding on ${}_A\Mm_A$.
 \end{proof}

 Let $c$ be a braiding on ${}_A\Mm_A$ and $R$ the corresponding canonical
$R$-matrix. Then $c$ is a symmetry, if and only if $S = R$, this
means that
\begin{equation}\eqlabel{2.1.1}
 R^2r^1\ot R^1r^2\ot R^3r^3=1\ot 1\ot 1
\end{equation}
that is, $R^{-1}=R^2\ot R^1\ot R^3$. The next theorem shows that
the list of equations satisfied by an $R$-matrix from \thref{1.1}
can be reduced to two equations and furthermore, we prove that all
braidings on the category of $A$-bimodules are symmetries.

\begin{theorem}\thlabel{2.1}
Let $A$ be a $k$-algebra. Then there is a bijection between the
set of canonical $R$-matrices and the set of all elements $R\in
A^{(3)}$ satisfying \equref{1.1.4} and the normalizing condition
\begin{equation}\eqlabel{2.1.2}
R^1R^2\ot R^3 = R^2\ot R^3R^1 = 1\ot 1
\end{equation}
Furthermore, $R$ is invariant under cyclic permutation of the
tensor factors,
\begin{equation}\eqlabel{2.1.3}
R=R^2\ot R^3\ot R^1=R^3\ot R^1\ot R^2,
\end{equation}
and we have the additional normalizing condition
\begin{equation}\eqlabel{2.1.4}
 R^1 \ot R^2 R^3 = 1\ot 1.
\end{equation}
In particular, every braiding on ${}_A\Mm_A$ is a symmetry.
 \end{theorem}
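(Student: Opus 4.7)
The plan is to establish the theorem in several stages corresponding to its separate assertions. For the forward direction of the bijection (a canonical R-matrix automatically satisfies \equref{1.1.4} and \equref{2.1.2}), \equref{1.1.4} is already an axiom from \thref{1.1}, so the content lies in extracting \equref{2.1.2}. I would start from \equref{1.1.5} and apply the algebra multiplication $\mu_{34}$ contracting positions 3 and 4; the $1$ in position 3 of the left-hand side collapses and one obtains $R = r^1R^1\ot r^2\ot r^3R^2R^3$. A further application of $\mu_{12}$ produces a two-tensor identity into which the centralizing conditions \equref{1.1.2}--\equref{1.1.4} can be fed (applied to the $r$-copy to migrate factors between positions and compress the product), ultimately collapsing the right-hand side to $1\ot 1$ and yielding $R^1R^2\ot R^3 = 1\ot 1$. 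Parallel manipulations starting from \equref{1.1.6} produce $R^2\ot R^3R^1 = 1\ot 1$.

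For the reverse direction, assume $R$ satisfies \equref{1.1.4} and \equref{2.1.2}. The key tool is that both halves of \equref{2.1.2} furnish resolutions of the unit in $A\ot A$, namely $r^1r^2\ot r^3 = 1\ot 1$ and $r^2\ot r^3r^1 = 1\ot 1$, which may be inserted into any expression to introduce an auxiliary copy of $R$. After such an insertion, \equref{1.1.4} applied to the $r$-copy (or to the original $R$) moves factors between tensor positions, and repackaging yields the missing centralizing identities \equref{1.1.2} and \equref{1.1.3}; the hexagon identities \equref{1.1.5}, \equref{1.1.6} are then obtained by the same device, starting from the proposed right-hand sides and rewriting using the normalizations together with the now-established centralizing conditions. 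Invertibility is handled by checking that $R^2\ot R^1\ot R^3$ is a two-sided inverse of $R$: one expands the product in $A^{(3)}$ (for instance $(r^2\ot r^1\ot r^3)(R^1\ot R^2\ot R^3) = r^2R^1\ot r^1R^2\ot r^3R^3$) and reduces it to $1\ot 1\ot 1$ using \equref{1.1.4} together with both halves of \equref{2.1.2}. Since this reduction is precisely \equref{2.1.1}, the concluding claim that every braiding on ${}_A\Mm_A$ is a symmetry falls out of the same computation.

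Cyclic invariance \equref{2.1.3} is established by inserting the resolution $r^2\ot r^3r^1 = 1\ot 1$ into positions 2, 3 of $R = R^1\ot R^2\ot R^3$ and using \equref{1.1.4} together with the now-available \equref{1.1.2}, \equref{1.1.3} to cycle the tensor factors, producing $R = R^2\ot R^3\ot R^1$; iterating yields the second cyclic identity. The additional normalization \equref{2.1.4} is then immediate: by \equref{2.1.3} the expression $R^1\ot R^2R^3$ equals $R^3\ot R^1R^2$, which is the image of $R^1R^2\ot R^3 = 1\ot 1$ under the swap of the two tensor factors, and therefore equals $1\ot 1$.

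I expect the hardest part to be the forward direction: although \equref{1.1.5}, \equref{1.1.6} are natural hexagon identities, extracting the crisp normalization \equref{2.1.2} forces one to track both copies $R$, $r$ across four tensor positions while applying the centralizing conditions in exactly the right order. A secondary but delicate obstacle is the invertibility computation in the reverse direction, where only \equref{1.1.4} and \equref{2.1.2} are available and a three-fold tensor product of mixed $R$- and $r$-factors must be reduced to $1\ot 1\ot 1$; identifying the correct intermediate rearrangements---typically two successive insertions of a resolution of unity---is the central technical step underlying both the invertibility and the symmetry assertion.
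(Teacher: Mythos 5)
Your reverse direction is essentially the paper's own argument: inserting the two halves of \equref{2.1.2} as resolutions of $1\ot 1$ and migrating factors with \equref{1.1.4} does recover the remaining axioms, and your invertibility check is sound --- the product $(R^2\ot R^1\ot R^3)\cdot R=r^2R^1\ot r^1R^2\ot r^3R^3$ indeed reduces to $1\ot1\ot1$ using only \equref{1.1.4} and \equref{2.1.2}, and the opposite-sided product follows by applying the flip of the first two legs, which is an algebra automorphism of $A^{(3)}$; your deduction of \equref{2.1.4} from \equref{2.1.3} and \equref{2.1.2} is also correct. The genuine gap is in the forward direction. You claim that after contracting \equref{1.1.5} with $\mu_{34}$ and $\mu_{12}$ the right-hand side collapses to $1\ot1$ ``using the centralizing conditions''. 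This cannot work: the identities \equref{1.1.2}--\equref{1.1.6} are satisfied by non-invertible elements, e.g.\ by $R=0$, which does not satisfy \equref{2.1.2}; hence no amount of rewriting with \equref{1.1.2}--\equref{1.1.4} alone can produce the normalization. The indispensable ingredient, which you never invoke in this direction, is the invertibility of $R$ in the algebra $A^{(3)}$: the paper's proof is a cancellation argument, namely contracting positions $2,3$ of \equref{1.1.6} gives $R=R\cdot(1\ot r^1r^2\ot r^3)$, and cancelling the invertible factor $R$ yields $R^1R^2\ot R^3=1\ot1$, while flipping the last two legs of \equref{1.1.5}, contracting, and using \equref{1.1.2} gives $R=R\cdot(R^3R^1\ot 1\ot R^2)$, whence $R^2\ot R^3R^1=1\ot1$.

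A secondary defect: even after reinstating invertibility, your pairing of hexagon axiom and normalization is off. Your own contraction of \equref{1.1.5} reads $R=(r^1\ot r^2\ot r^3)\cdot(R^1\ot 1\ot R^2R^3)$, so cancellation there yields \equref{2.1.4}, i.e.\ $R^1\ot R^2R^3=1\ot1$, not $R^1R^2\ot R^3=1\ot1$; and after your further $\mu_{12}$ the identity $R^1R^2\ot R^3=r^1R^1r^2\ot r^3R^2R^3$ has the auxiliary copy trapped inside products, with no invertible factor left in $A^{(2)}$ to cancel. The first half of \equref{2.1.2} comes from \equref{1.1.6}, the second from \equref{1.1.5}, as above. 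Note finally that the concluding symmetry claim depends on the forward direction (a braiding gives, via \thref{1.1}, an invertible $R$ satisfying \equref{1.1.2}--\equref{1.1.6}, from which one must first extract \equref{2.1.2} before your computation of \equref{2.1.1} applies), so it inherits this gap until the cancellation step is supplied; your reordering in the converse (centralizing identities before cyclicity, rather than cyclicity first as in the paper) is a harmless reshuffling.
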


 \begin{proof}
Let $R$ be an $R$-matrix as in \thref{1.1}, i.e. $R$ is invertible
and satisfies (\ref{eq:1.1.2}-\ref{eq:1.1.6}). Multiplying the
second and the third tensor factor in \equref{1.1.6}, we find that
$R=R^1\ot R^2r^1 r^2\ot R^3r^3=R(1\ot r^1r^2\ot r^3)$. From the
fact that $R$ is invertible, it follows that $1\ot 1\ot 1=1\ot
r^1r^2\ot r^3$, and the first normalizing condition of
\equref{2.1.2} follows after we multiply the first two tensor
factors. On the other hand, if we apply the flip map on the last
two positions in \equref{1.1.5} we obtain that $R^1\ot R^2\ot R^3
\ot 1 = r^1R^1\ot r^2\ot R^3 \ot r^3R^2$. Multiplying the last two
positions we obtain:
\begin{eqnarray*}
R = r^1R^1\ot r^2\ot R^3 r^3R^2 \, \equal{\equref{1.1.2}} \, r^1
R^3 R^1 \ot r^2 \ot r^3 R^2 = R (R^3 R^1 \ot 1 \ot R^2)
\end{eqnarray*}
As $R$ is invertible it follows that $R^3 R^1 \ot R^2 = 1\ot 1$,
as needed.\\
Conversely, assume now that $R$ satisfies \equref{1.1.4} and
\equref{2.1.2}. We will show that $R$ is a canonical $R$-matrix satisfying
\equref{2.1.1} and hence from the observations preceding
\thref{2.1} we obtain that the braiding $c$ corresponding to $R$
is a symmetry. First we show that $R$ is invariant under cyclic
permutation of the tensor factors.
 \begin{eqnarray*}
 &&\hspace*{-2cm}
 R^3\ot R^1\ot R^2
 \, \equal{\equref{2.1.2}} \, R^3r^1r^2\ot r^3R^1\ot R^2
 \, \equal{\equref{1.1.4}} \, R^3r^2\ot r^3R^1\ot r^1R^2\\
  &\equal{\equref{1.1.4}}&r^2\ot r^3R^3R^1\ot r^1R^2
   \, \equal{\equref{2.1.2}} \, r^2\ot r^3\ot r^1.
   \end{eqnarray*}
This implies immediately that the central conditions
(\ref{eq:1.1.2}-\ref{eq:1.1.3}) are also satisfied. Next we show
that (\ref{eq:1.1.5}-\ref{eq:1.1.6}) are satisfied.
 \begin{eqnarray*}
 &&\hspace*{-2cm}
 r^1R^1\ot r^2\ot r^3R^2\ot R^3
 \, \equal{\equref{1.1.3}} \,
R^1\ot r^2\ot r^3R^2r^1\ot R^3\\
& \equal{\equref{1.1.4}}&R^1\ot R^2r^2\ot r^3r^1\ot R^3
\, \equal{\equref{2.1.2}} \, R^1\ot R^2\ot 1\ot R^3;\\
&&\hspace*{-2cm} R^1\ot R^2r^1\ot r^2\ot R^3r^3 \,
\equal{\equref{1.1.4}} \,
R^1\ot r^3R^2r^1\ot r^2\ot R^3\\
&\equal{\equref{1.1.4}}& R^1\ot r^3r^1\ot R^2r^2\ot R^3 \,
\equal{\equref{2.1.2}} \, R^1\ot 1\ot R^2\ot R^3.
\end{eqnarray*}
Finally, we prove that \equref{2.1.1} holds since
 \begin{eqnarray*}
 &&\hspace*{-2cm}
 R^1r^2\ot R^2r^1\ot R^3r^3
  \, \equal{\equref{1.1.3}} \,
R^1r^2R^2\ot r^1\ot R^3r^3 \, \equal{\equref{1.1.4}} \,
R^1R^2\ot r^1\ot R^3r^2r^3\\
& \equal{\equref{2.1.2}}& 1\ot r^1\ot r^2r^3 \,
\equal{\equref{2.1.3}} \, 1\ot r^3\ot r^1r^2 \,
\equal{\equref{2.1.2}} \, 1\ot 1\ot 1.
\end{eqnarray*}
and the proof is finished.
\end{proof}

The commutative case is completely classified by the following result.

\begin{proposition}\prlabel{1.2} Let $A$ be a $k$-algebra. Then:
\begin{enumerate}
\item If a monomial $x\ot y\ot z$ is a canonical $R$-matrix, then it is
equal to $1\ot 1\ot 1$.
\item $1\ot 1\ot 1$ is a canonical $R$-matrix if and only if $u_A:\ k\to A$
is an epimorphism of
 rings.
\item If $A$ is commutative, then $(A, R)$ is an algebra with a
canonical R-matrix if and only if $R = 1 \ot 1 \ot 1$ and $u_A : k
\to A$ is an epimorphism of rings.
\end{enumerate}
\end{proposition}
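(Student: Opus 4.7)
My plan is to prove the three parts in the order (2), (3), (1), since both (1) and (3) will reduce to the criterion established in (2). The key observation, recurring throughout, is that $u_A: k \to A$ is a ring epimorphism if and only if $a \otimes 1 = 1 \otimes a$ in $A \otimes_k A$ for every $a \in A$, equivalently the multiplication $A \otimes_k A \to A$ is an isomorphism (and then by iteration so is $A^{(3)} \to A$).

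For (2), I would substitute $R = 1 \otimes 1 \otimes 1$ into the axioms \equref{1.1.2}--\equref{1.1.6}: equations \equref{1.1.5} and \equref{1.1.6} become tautologies, while \equref{1.1.2}--\equref{1.1.4} each reduce to $a \otimes 1 \otimes 1 = 1 \otimes a \otimes 1 = 1 \otimes 1 \otimes a$ in $A^{(3)}$, which is equivalent (by multiplying two factors, and conversely by tensoring with $1$) to the flip identity $a \otimes 1 = 1 \otimes a$ in $A \otimes_k A$. Invertibility of $1 \otimes 1 \otimes 1$ is automatic. So (2) reduces to the standard characterization of ring epimorphisms.

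For (3), the ``if'' direction is immediate from (2). For the converse, let $A$ be commutative and $R$ a canonical R-matrix. I would apply the multiplication on the first two tensor factors to \equref{1.1.4}, obtaining $R^1 a R^2 \otimes R^3 = R^1 R^2 \otimes R^3 a$; using commutativity to rewrite $R^1 a R^2 = a R^1 R^2$ together with the normalizing condition $R^1 R^2 \otimes R^3 = 1 \otimes 1$ from \thref{2.1}, this collapses to $a \otimes 1 = 1 \otimes a$. By (2), $u_A$ is an epi, whence $A^{(3)} \cong A$ via iterated multiplication. Since $R^1 R^2 R^3 = 1$, this forces $R = 1 \otimes 1 \otimes 1$.

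For (1), the main obstacle is that commutativity is no longer available, so the flip identity must be extracted by more delicate manipulations. Given $R = x \otimes y \otimes z$, the normalizing conditions of \thref{2.1} yield $xyz = 1$ and $yzx = 1$, so $yz$ is a two-sided inverse of $x$. I would first apply the total multiplication $u \otimes v \otimes w \mapsto uvw$ to \equref{1.1.4} to get $xayz = a$; since $yz = x^{-1}$, this reads $xax^{-1} = a$, proving $x \in Z(A)$. The key step is then to apply the contraction $u \otimes v \otimes w \mapsto uw \otimes v$ to \equref{1.1.3}, producing $axz \otimes y = xz \otimes ya$. Using $xz = zx$ (by centrality of $x$) together with the cyclic normalizing condition $zx \otimes y = 1 \otimes 1$ (obtained from $R^1 R^2 \otimes R^3 = 1 \otimes 1$ applied to the cyclic shift $R = z \otimes x \otimes y$ given by \thref{2.1}), both sides simplify to give $a \otimes 1 = 1 \otimes a$ in $A \otimes_k A$. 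Part (2) then completes the argument as in (3).
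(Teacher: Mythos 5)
Your proof is correct, and while part (2) matches the paper's argument (substituting $R=1\ot 1\ot 1$ reduces the centralizing conditions to the flip identity $a\ot 1=1\ot a$, i.e.\ Silver's criterion for $u_A$ to be a ring epimorphism), your treatment of (1) and (3) follows a genuinely different route. For (3), the paper works inside $A^{(4)}$: starting from \equref{1.1.5} it uses \equref{1.1.4} and commutativity to rewrite $R^1\ot R^2\ot 1\ot R^3$ as $(R^1\ot R^2\ot 1\ot R^3)(r^1\ot r^2\ot r^3\ot 1)$ and cancels by invertibility of $R$, getting $R=1\ot 1\ot 1$ directly and only then invoking (2); you instead contract \equref{1.1.4} and combine with the normalizing condition $R^1R^2\ot R^3=1\ot 1$ of \thref{2.1} to obtain the flip identity first, and then use the resulting isomorphism $A^{(3)}\cong A$ (iterated multiplication) together with $R^1R^2R^3=1$ to pin down $R$. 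For (1), the paper uses invertibility of $R$ and \equref{1.1.5}--\equref{1.1.6} to get $yx\ot z=x\ot zy=1\ot 1$, concludes that $y$ is a unit with $y^{-1}=xz$, and finishes with a shuffle via \equref{1.1.4}; you instead exploit the normalized form of \thref{2.1} (\equref{2.1.2}, \equref{2.1.3}, \equref{2.1.4}) to show $x$ is central, extract $a\ot 1=1\ot a$ by contracting \equref{1.1.3}, and reduce once more to the epimorphism criterion. Your approach buys uniformity (all three parts funnel through the epimorphism characterization and the isomorphism $A^{(3)}\cong A$), never uses invertibility of $R$ explicitly, and yields the extra observation that even a monomial canonical R-matrix forces $k\to A$ to be an epimorphism; the paper's proof is shorter on (1) and more self-contained relative to the axioms of \thref{1.1}, whereas yours leans on \thref{2.1} --- which is legitimate, since \thref{2.1} precedes \prref{1.2} in the paper.
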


 \begin{proof}
 1. Let $R = x\ot y\ot z$ be a canonical $R$-matrix. From (\ref{eq:1.1.5}-\ref{eq:1.1.6}), it follows that
 $$x \ot 1 \ot y \ot z = x \ot yx \ot y \ot z^{2}~~{\rm and}~~x \ot y \ot 1 \ot z = x^{2} \ot y \ot zy \ot z.$$
 Since $R$ is invertible, this implies that
 $$1\ot 1\ot 1\ot 1=1\ot yx\ot 1\ot z~~{\rm and}~~1\ot 1\ot 1\ot 1=x\ot 1\ot zy\ot 1,$$
 and, multiplying tensor factors, we find that
 $1\ot 1=yx\ot z~~{\rm and}~~1\ot 1=x\ot zy$.
 It then follows that $yxz=xzy=1$, hence $y$ is invertible with $y^{-1}=xz$. Finally
 $$x\ot y\ot z=x\ot y^2y^{-1}\ot z\equal{\equref{1.1.4}}x\ot yy^{-1}\ot z y = 1\ot 1\ot 1.$$
 2. If $R=1\ot 1\ot 1$, then the three centralizing conditions (\ref{eq:1.1.2}-\ref{eq:1.1.4})
 are equivalent to $a\ot 1=1\ot a$, for all $a\in A$, which is equivalent to $u_A:\ k\to A$
 being an epimorphism of rings, see \cite{silver}.\\
3. Assume that $(A, R)$ is an algebra with a canonical R-matrix. Then:
 \begin{eqnarray*}
 &&\hspace*{-2cm}
R^1\ot R^2\ot 1\ot R^3 \, \equal{\equref{1.1.5} } \, r^1 R^1 \ot r^2 \ot r^3 R^2 \ot R^3\\
&\equal{\equref{1.1.4}}&  r^1 R^1 \ot R^2 r^2  \ot r^3 \ot R^3
= \sum R^1 r^1 \ot R^2 r^2  \ot r^3 \ot R^3.
\end{eqnarray*}
At the third step, we used the fact that $A$ is commutative.
From the fact that $R$ is invertible, it follows that $R^1\ot R^2\ot 1\ot R^3=1 \ot 1 \ot 1\ot 1$
and $R=1 \ot 1 \ot 1$. The rest of
the proof follows from 2.
\end{proof}

\begin{remarks}\relabel{1.1.b}
1. The notion of quasi-triangular bialgebroid was introduced in \cite[Def. 19]{Donin}. Quasi-triangular
structures on a bialgebroid are given by universal R-matrices, see \cite[Prop. 20]{Donin} and
\cite[Def. 3.15]{Bohm},
and correspond bijectively to braidings on the category of modules over the bialgebroid
\cite[Theorem 3.16]{Bohm}. It is well-known that $A^e$ is an $A$-bialgebroid, with the Sweedler canonical
coring as underlying coring, and $A$-bimodules are left $A^e$-modules.
Comparing our \thref{1.1} with the (left handed) version of  \cite[Theorem 3.16]{Bohm} yields that canonical
R-matrices for $A$ correspond bijectively to universal R-matrices for the canonical bialgebroid
$A^e$. This leads to an alternative proof of \thref{1.1}, if we identify the R-matrices from \cite{Donin}
with our R-matrices. This, however, is more complicated than the straightforward proof that we
presented, that also has the advantage that it is self-contained and avoids all technicalities
on bialgebroids.

2. (\ref{eq:1.1.5}-\ref{eq:1.1.6}) can be rewritten as $R^{124} =
R^{123}R^{134}$ and $R^{134} = R^{124} R^{234}$ in the algebra
$A^{(4)}$.

3. It follows from \prref{1.2} that there is only one braiding on the category of (left)
$k$-modules, namely the one given by the usual switch map.
\end{remarks}

Before we state our next main result \thref{2.2}, we need a technical Lemma.
If $M\in {}_A\Mm_A$, then $A\ot M$ is a $k\ot A$-bimodule, and we can consider
$$(A\ot M)^{k\ot A}=\{\sum_i a_i\ot m_i\in A\ot M~|~\sum_i a_i\ot am_i=\sum_i a_i\ot m_ia,~~{\rm for~all}~
a\in A\}.$$ If $M=A^{(2)}$, then $(A\ot A^{(2)})^{k\ot A}$ is the
set of elements $R\in A^{(3)}$ satisfying \equref{1.1.4}. We have
a map $\alpha_M:\ A\ot M^A\to (A\ot M)^A$, $\alpha_M(a\ot m) =a\ot
m$.

\begin{lemma}\lelabel{free}
Let $M$ be an $A$-bimodule. The map $\alpha_M$ is injective if $A$
is flat as a $k$-module, and bijective if $A$ is free as a
$k$-module.
\end{lemma}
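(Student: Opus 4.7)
My plan is to factor $\alpha_M$ through the map $A\ot M^A \to A\ot M$ induced by the inclusion $M^A \hookrightarrow M$, and then analyze each part. Note that the composite $A\ot M^A \xrightarrow{\alpha_M} (A\ot M)^{k\ot A}\hookrightarrow A\ot M$ is exactly this induced map, since $\alpha_M$ is given by the identity formula $a\ot m\mapsto a\ot m$. It is well-defined because for $m\in M^A$ we have $am=ma$ for all $a\in A$, so every elementary tensor $a\ot m$ with $m\in M^A$ automatically lies in $(A\ot M)^{k\ot A}$.

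For the injectivity assertion, I would invoke flatness directly. The inclusion $M^A\hookrightarrow M$ is an injective $k$-linear map, so tensoring over $k$ with the flat $k$-module $A$ yields an injection $A\ot M^A \hookrightarrow A\ot M$. Since $\alpha_M$ is the corestriction of this injection to $(A\ot M)^{k\ot A}$, it is itself injective.

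For surjectivity in the free case, I would choose a $k$-basis $\{e_i\}_{i\in I}$ of $A$. Every element of $A\ot M$ has a unique expression $\sum_i e_i\ot m_i$ with $m_i\in M$ and almost all $m_i=0$; moreover $A\ot M \cong \bigoplus_i M$ as $k$-modules. If $\sum_i e_i\ot m_i \in (A\ot M)^{k\ot A}$, the defining condition reads $\sum_i e_i\ot am_i = \sum_i e_i\ot m_ia$ for every $a\in A$. Uniqueness of the basis decomposition forces $am_i=m_ia$ for all $i$ and all $a\in A$, whence each $m_i\in M^A$. Then $\sum_i e_i\ot m_i = \alpha_M\bigl(\sum_i e_i\ot m_i\bigr)$ lies in the image, giving surjectivity. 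Combined with injectivity from the flat case (free implies flat), this yields bijectivity.

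The only conceptual point worth flagging is that one cannot replace \emph{free} by \emph{flat} in the surjectivity argument: flatness is enough to preserve the kernel of $M\to M$ (giving injectivity), but for surjectivity one needs the ability to read off coefficients uniquely, which is exactly what a basis provides. So the proof naturally splits into these two halves with exactly the stated hypotheses, and there is no real obstacle beyond recognizing this factorization.
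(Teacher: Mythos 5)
Your proof is correct and follows essentially the same route as the paper: flatness gives injectivity by corestricting the injection $A\ot M^A\to A\ot M$, and in the free case one reads off coefficients with respect to a $k$-basis of $A$ to see that each component lies in $M^A$ (the paper merely starts from an arbitrary tensor $\sum_i a_i\ot m_i$ and regroups along the basis, which is the same idea). No gaps.
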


\begin{proof}
If $A$ is flat, then $A\ot M^A\to A\ot M$ is injective, and then $\alpha_M$ is also injective.\\
Assume that $A$ is free as a $k$-module, and let $\{e_j~|~j\in
I\}$ be a free basis of $A$. Assume that $x=\sum_i a_i\ot m_i\in
(A\ot M)^{k\ot A}$. For all $i$, we can write $a_i=\sum_{j\in I}
\alpha^j_ie_j$, for some $\alpha^j_i\in k$. Then $x=\sum_{j\in I}
e_j\ot \bigl(\sum_i\alpha^j_im_i\bigr)$. Now
$$x=\sum_{j\in I} e_j\ot \bigl(\sum_i\alpha^j_iam_i\bigr)=
\sum_i a_i\ot am_i=\sum_i a_i\ot m_ia= \sum_{j\in I} e_j\ot
\bigl(\sum_i\alpha^j_im_ia\bigr),$$ hence
$\sum_i\alpha^j_iam_i=\sum_i\alpha^j_im_ia$, for all $j\in I$, and
$\sum_i\alpha^j_im_i\in M^A$. We conclude that $x=\sum_{j\in I}
e_j\ot \bigl(\sum_i\alpha^j_im_i\bigr)\in \im \alpha_M$, and this
shows that $\alpha_M$ is surjective.
\end{proof}

In our next result we assume $A$ to be flat over $k$. Hence the
map $\alpha_{A^{(2)}}$ is injective and this will allows us to
identify the elements in $A \ot (A \ot A)^{A}$ with the elements
in $A \ot A \ot A$ satisfying \equref{1.1.4}.

\begin{theorem}\thlabel{2.2}
Let $A$ be a flat $k$-algebra and consider the conditions:
\begin{enumerate}
\item $(F,G)$ is a pair of inverse equivalences, that is, $A$ is
an Azumaya algebra; \item The functor $G=(-)^A:\ {}_A\Mm_A\to
\Mm_k$ is fully faithful; \item the functor $G=(-)^A:\
{}_A\Mm_A\to \Mm_k$ is separable; \item there exists $R=R^1\ot
R^2\ot R^3\in A\ot (A\ot A)^A$ such that $R^1R^2\ot R^3=1\ot 1$;
\item there exists a unique $R=R^1\ot R^2\ot R^3\in A\ot (A\ot
A)^A$ such that $R^1R^2\ot R^3=1\ot 1$; \item there exists a
braiding on ${}_A\Mm_A$, that is, there exists $R\in A^{(3)}$ such
that $(A,R)$ is an algebra with a canonical R-matrix.
\end{enumerate}
Then $(1)\Rightarrow (2)\Leftrightarrow (3)\Leftrightarrow (4)\Leftrightarrow (5) \Rightarrow (6)$.
If $A$ is central, then $(2)\Rightarrow (1)$. If $A$ is free as a $k$-module, then $(6)\Rightarrow (5)$,
and in this case the braiding on ${}_A\Mm_A$ is unique. If $k$ is a field, and $A$ is finite dimensional,
then $(6)\Rightarrow (1)$, and all six assertions are equivalent.
\end{theorem}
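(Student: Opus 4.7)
The plan is to close the cycle $(2)\Rightarrow (3)\Rightarrow (4)\Rightarrow (2)$ and $(4)\Leftrightarrow (5)$ using Rafael's Theorem and a correspondence between cosplittings and elements $R$; then extract $(5)\Rightarrow (6)$ from uniqueness of $R$; finally treat the supplementary hypotheses. The implications $(1)\Rightarrow (2)$ (the counit of an equivalence is an isomorphism), $(2)\Rightarrow (3)$ (then $\varepsilon^{-1}$ is a cosplitting so $G$ is separable by Rafael), and $(5)\Rightarrow (4)$ are immediate. For the bijection $(3)\Leftrightarrow (4)$: given a cosplitting $\zeta$ of $\varepsilon$, set $R:=\zeta_{A^{(2)}}(1\ot 1)$; by construction $R\in A\ot (A^{(2)})^A$, meaning $R^2\ot R^3$ satisfies \equref{1.1.4}, and $\varepsilon_{A^{(2)}}\circ\zeta_{A^{(2)}}$ applied to $1\ot 1$ yields $R^1R^2\ot R^3=1\ot 1$. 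Conversely, given $R$ as in (4), define $\zeta_M(m):=R^1\ot R^2mR^3$, which is well-defined because the bimodule map $A^{(2)}\to M$, $a\ot b\mapsto amb$ carries invariants to invariants; naturality in $M$ is immediate, and the cosplitting identity is the normalization. A Yoneda-style argument with the family $f_m:A^{(2)}\to M$, $f_m(a\ot b)=amb$, shows these constructions are mutually inverse.

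The crucial implication $(4)\Rightarrow (2)$ comes from showing $\zeta$ is a two-sided inverse of $\varepsilon$. Condition \equref{1.1.4} forces $R^2aR^3\in Z(A)$ for every $a\in A$, since the operator $x\mapsto R^2xR^3$ commutes with both left and right multiplication by $A$. For $a\ot m\in A\ot M^A$, using $m\in M^A$ to commute $m$ past $R^2a$, one computes
\begin{equation*}
\zeta_M(\varepsilon_M(a\ot m))=\zeta_M(am)=R^1\ot R^2amR^3=R^1\ot (R^2aR^3)m.
\end{equation*}
The remaining identity $R^1\ot R^2aR^3=a\ot 1$ in $A\ot Z(A)$ I would derive by applying naturality of $\zeta$ to a suitable bimodule map into $A\ot M^A$, combined with \equref{1.1.4}. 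Once this is in hand, $\zeta\circ\varepsilon$ is the identity on $A\ot M^A$, so $\varepsilon$ is an isomorphism; this gives (2), and uniqueness of $R$ in (5) follows because any cosplitting must equal the canonical $\varepsilon^{-1}$.

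For $(5)\Rightarrow (6)$, uniqueness forces any cyclic permutation of $R$ that again satisfies (4) to equal $R$. The rotation $R^2\ot R^3\ot R^1$ satisfies (4): its normalization $R^2R^3\ot R^1=1\ot 1$ is equivalent to \equref{2.1.4} and is derivable from the analysis above, while its centralizing condition comes from the induced variant of \equref{1.1.2}. Cyclic invariance of $R$, combined with the given normalization, then delivers both $R^1R^2\ot R^3=R^2\ot R^3R^1=1\ot 1$ of \thref{2.1}, so $R$ is a canonical $R$-matrix. For the supplementary conditions: $(2)\Rightarrow (1)$ under centrality follows from the classical characterization \cite[III.5.1]{KO}; if $A$ is free over $k$, \leref{free} identifies $A\ot (A^{(2)})^A$ with $(A\ot A^{(2)})^A$, so the braiding of (6) yields $R$ satisfying (4), hence (5) together with uniqueness of the braiding; if $k$ is a field and $A$ finite dimensional, then $A$ is automatically free, and a direct computation with $M=A$ in (2) forces $Z(A)=k$, reducing to the central case. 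The main obstacle is the universal identity $R^1\ot R^2aR^3=a\ot 1$ in $A\ot Z(A)$, whose proof requires delicate naturality arguments combining \equref{1.1.4} with the normalization.
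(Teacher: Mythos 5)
There is a genuine gap at exactly the point you flag as ``the main obstacle'': the identity $R^1\ot R^2aR^3=a\ot 1$, and more generally the centralizing conditions \equref{1.1.2}--\equref{1.1.3} and the extra normalization \equref{2.1.4}, are asserted but never derived. Your proposed tool -- ``naturality of $\zeta$ applied to a suitable bimodule map'' -- cannot do this work, because in the implication $(4)\Rightarrow(2)$ the transformation $\zeta$ is simply \emph{defined} by $\zeta_M(m)=R^1\ot R^2mR^3$; its naturality is automatic and carries no information beyond the formula, so it cannot produce new algebraic identities for $R$. (Also, \equref{1.1.4} alone does not force $R^2aR^3\in Z(A)$; it only says that $R^1\ot R^2aR^3$ lies in $(A\ot A)^{k\ot A}$, which equals $A\ot Z(A)$ only after invoking \leref{free} under freeness.) The missing idea is the paper's short two-step argument: first, since $R\in A\ot(A\ot A)^A$, write $R=\sum_i a_i\ot b_i$ with $b_i\in(A\ot A)^A$ and observe $R^2\ot R^3R^1=\sum_i b_ia_i=\sum_i a_ib_i=R^1R^2\ot R^3=1\ot 1$, so \emph{both} normalizations of \equref{2.1.2} hold; second, invoke the converse direction of \thref{2.1}, which from \equref{1.1.4} and \equref{2.1.2} yields that $R$ is a canonical R-matrix, cyclically invariant, and satisfies \equref{1.1.2}, \equref{1.1.3} and \equref{2.1.4}. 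This gives $(4)\Rightarrow(6)$ at once, and your desired identity then falls out in one line: $R^1\ot R^2aR^3\equal{\equref{1.1.2}}R^1a\ot R^2R^3\equal{\equref{2.1.4}}a\ot 1$, so $\zeta\circ\varepsilon=\mathrm{id}$ and $(4)\Rightarrow(2)$ follows. Your route to $(5)\Rightarrow(6)$ inherits the same gap, since it appeals to ``the induced variant of \equref{1.1.2}'' and to \equref{2.1.4}, neither of which you have established, and the uniqueness-forces-cyclic-invariance argument presupposes them.

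A secondary problem is the supplementary implication $(2)\Rightarrow(1)$ for central $A$: citing \cite[Theorem III.5.1]{KO} only restates that (1) is equivalent to $(F,G)$ being inverse equivalences; full faithfulness of $G$ gives that the counit is an isomorphism, but you still must show the unit is one. The paper instead deduces from (4) that $\varepsilon_{A\ot A}$ is surjective and invokes \cite[Theorem 3.1]{AG} to conclude that $A$ is separable over $Z(A)=k$, hence central separable, hence Azumaya; some argument of this kind is needed and is absent from your proposal. The remaining parts (the trivial implications, the cosplitting/$R$ dictionary for $(3)\Leftrightarrow(4)$, uniqueness in $(2)\Rightarrow(5)$ via $R=\varepsilon_{A\ot A}^{-1}(1\ot 1)$, the use of \leref{free} for $(6)\Rightarrow(5)$ when $A$ is free, and the dimension count over a field) do match the paper's argument in outline.
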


\begin{proof}
$\ul{(1) \Rightarrow (2)}$, $(2)\Rightarrow (3)$ and $(5) \Rightarrow (4)$ are trivial.\\
$\ul{(3) \Rightarrow (4)}$. If $G$ is separable, then we have a
natural transformation $\zeta:\ 1\Rightarrow FG$ such that
$\varepsilon_M\circ \zeta_M=M$, for all $M\in {}_A\Mm_A$. Now let
$R=\zeta_{A^{(2)}}(1\ot 1)=R^1\ot R^2\ot R^3\in FG(A^{(2)})=A\ot
(A\ot A)^A$.
Then $1\ot 1=(\varepsilon_{A^{(2)}}\circ \zeta_{A^{(2)}})(1\ot 1)= R^1R^2\ot R^3$.\\
The natural transformation $\zeta$ is completely determined by $R$. For an $A$-bimodule $M$
and $m\in M$, we define $f_m$ as in the proof of \thref{1.1}. From the naturality of $\zeta$, it
follows that the diagram
$$\xymatrix{
A^{(2)}\ar[d]_{f_m}\ar[rr]^{\zeta_{A^{(2)}}}&& A\ot (A\ot A)^A\ar[d]^{A\ot (f_m)^A}\\
M\ar[rr]^{\zeta_M}&&A\ot M^A}$$
commutes. Evaluating the diagram at $1\ot 1$, we find that
\begin{equation}\eqlabel{2.2.1}
\zeta_M(m)=R^1\ot R^2mR^3.
\end{equation}
$\ul{(4) \Rightarrow (6)}$. Write $R=\sum_i a_i\ot b_i$, with
$a_i\in A$ and $b_i\in (A\ot A)^A$. Then $R^2\ot R^3R^1=\sum_i
b_ia_i=\sum_i a_ib_i=R^1R^2\ot R^3=1\ot 1$, thus $R$ satisfies
\equref{2.1.2}. Moreover, as $R\in A \ot (A\ot A)^{A}$ it follows
that $R$ also satisfies \equref{1.1.4}. Then using \thref{2.1} we
obtain that $R$ is a canonical $R$-matrix and it determines a
braiding on ${}_A\Mm_A$.

$\ul{(4) \Rightarrow (2)}$. Given $R\in A\ot (A\ot A)^A$
satisfying $R^1R^2\ot R^3=1\ot 1$, we define $\zeta$ using
\equref{2.2.1}. It follows immediately that $(\varepsilon_M\circ
\zeta_M)(m)=
\varepsilon(R^1\ot R^2mR^3)=R^1R^2mR^3=m$.\\
We have seen in the proof of $\ul{4) \Rightarrow 6)}$ that
\equref{1.1.3} and \equref{2.1.4} are satisfied. For $a_i\in A$ and $m_i\in M^A$, we then compute
\begin{eqnarray*}
&&\hspace*{-2cm} (\zeta_M \circ \varepsilon_M ) (\sum_i a_i \ot
m_i) = \sum_i R^1 \ot R^2 a_i m_i R^3 \, \equal{\equref{1.1.3}} \,
\sum_i a_i R^1
\ot R^2 m_i R^3 \\
&= & \sum_i a_i R^1 \ot R^2 R^3 m_i \, \equal{\equref{2.1.4}} \,
\sum_i a_i \ot m_i.
\end{eqnarray*}
This shows that $\varepsilon$ is a natural transformation with inverse $\zeta$, and $G$ is fully faithful.\\
$\ul{(2) \Rightarrow (5)}$. We have already seen that 2) implies 4), and this shows that $R$ exists.
If $G$ is fully faithful, then $\varepsilon_M$ is invertible, for all $M\in {}_A\Mm_A$. If $R\in A\ot (A\ot A)^A$
satisfies $R^1R^2\ot R^3=1\ot 1$, then $\varepsilon_{A\ot A}(R)=1\ot 1$, hence $R=
\varepsilon_{A\ot A}^{-1}(1\ot 1)$.\\
$\ul{(6) \Rightarrow (4)}$. From (5), it follows that there exists
$R\in (A\ot A^{(2)})^{k\ot A}$ such that $R^1R^2\ot R^3=1\ot 1$,
see \thref{2.1}. $\alpha_{A^{(2)}}$ is bijective, see
\leref{free}, hence $\alpha_{A^{(2)}}^{-1}(R)\in
A\ot (A\ot A)^A$ satisfies (3). The uniqueness of $R$ follows from (4).\\
$\ul{(4)\Rightarrow (1)}$. Assume that $A$ is central. From (4),
it follows that $\varepsilon_{A\ot A}: A\ot (A\ot A)^A\to A\ot A$
is surjective, and then it follows from \cite[Theorem 3.1]{AG}
that $A$
is separable over $Z(A)=k$. Thus $A$ is central separable, and therefore Azumaya.\\
$(6)\Rightarrow (1)$. If $k$ is a field, then $A$ is free, so $(6)$ implies $(5)$, and, a fortiori, $(2)$.
Then $\varepsilon_A : A \ot A^A \to A$ is an isomorphism of $A$-bimodules, and therefore also
of vector spaces. A count of dimensions shows that ${\rm dim}_k (Z (A)) = {\rm dim}_k (A^A)
= 1$, so that $Z(A) = k 1_A$, and $A$ is central, and then $(1)$ follows from $(2)$.
\end{proof}

In particular, applying \thref{2.2} for finite dimensional
algebras over fields we obtain the following surprising
characterization of central simple algebras:

\begin{corollary}\colabel{centralsimple}
For a finite dimensional algebra $A$ over a field $k$, the
following assertions are equivalent:
\begin{enumerate}
\item $A$ is a central simple algebra;
\item there exists a (unique) braiding on ${}_A\Mm_A$;
\item there exists a (unique) invertible element $R \in A \ot A
\ot A$ satisfying the conditions
$
R^1\ot a \, R^2\ot R^3 = R^1\ot R^2\ot R^3 \, a$ and $R^1R^2\ot
R^3 = R^2\ot R^3R^1 = 1\ot 1
$,
for all $a\in A$.
\end{enumerate}
\end{corollary}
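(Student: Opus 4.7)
The plan is to deduce this corollary essentially as an immediate specialization of \thref{2.2} to the setting where $k$ is a field and $A$ is finite dimensional. The first thing I would observe is that under these hypotheses $A$ is automatically free (hence flat) as a $k$-module, so the full strength of \thref{2.2} applies and all six conditions (1)--(6) listed there are equivalent. In particular, the existence of a braiding on ${}_A\Mm_A$ is equivalent to $A$ being an Azumaya algebra, and by the classical fact that Azumaya $k$-algebras over a field are precisely the central simple algebras (\cite[Theorem III.5.1]{KO} together with the standard identification of central separable algebras with central simple algebras over fields), condition (1) of the corollary matches condition (1) of \thref{2.2}.

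For the equivalence (1)$\Leftrightarrow$(2), I would simply invoke (1)$\Leftrightarrow$(6) of \thref{2.2}, noting that the uniqueness of the braiding in (2) is precisely the content of the final clause of \thref{2.2} in the free case. For (2)$\Leftrightarrow$(3), I would use \thref{1.1} to translate braidings into canonical $R$-matrices, and then \thref{2.1}, which shows that canonical $R$-matrices are exactly the elements of $A^{(3)}$ satisfying the centralizing condition \equref{1.1.4} together with the normalizing condition \equref{2.1.2}; invertibility of $R$ then follows automatically from \thref{2.1} (the inverse being $R^2 \ot R^1 \ot R^3$). Thus (3) is just a rephrasing of the existence of a canonical $R$-matrix, which via \thref{1.1} corresponds bijectively to braidings, giving both existence and uniqueness simultaneously.

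There is really no essential obstacle here, since all the heavy lifting has been done in \thref{1.1}, \thref{2.1} and \thref{2.2}; the only thing to be careful about is matching up the conditions correctly. In particular one should not simply quote condition (3) of \thref{2.2} as being condition (3) of the corollary: the corollary's condition (3) uses the reduced list from \thref{2.1}, not the original list from \thref{1.1}, so the passage through \thref{2.1} is the one non-trivial translation step. The uniqueness assertions in (2) and (3) both come from condition (5) of \thref{2.2}, which holds in the free case. Putting these ingredients together yields the chain $(1)\Leftrightarrow(2)\Leftrightarrow(3)$ with uniqueness, completing the proof.
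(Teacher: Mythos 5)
Your proposal is correct and follows essentially the same route as the paper, which obtains the corollary by specializing \thref{2.2} to a finite dimensional algebra over a field (where $A$ is free, so all six conditions are equivalent and the braiding is unique), identifying Azumaya over a field with central simple, and using \thref{1.1} together with \thref{2.1} to translate braidings into the reduced conditions \equref{1.1.4} and \equref{2.1.2} with invertibility automatic. The one translation step you flag --- passing through \thref{2.1} rather than quoting a condition of \thref{2.2} verbatim --- is exactly what the paper's implicit argument requires.
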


For any $k$-algebra $A$, the functor $F:\ \Mm_k\to {}_A\Mm_A$ is strong monoidal. Indeed,
for any $N,N'\in \Mm_k$, we have natural isomorphisms $\varphi_0:\ F(k)=A\ot k\to A$ and
$$\varphi_{N,N'}:\ F(N)\ot_A F(N')=(A\ot N)\ot_A (A\ot N')\to F(N\ot N')=A\ot N\ot N'$$
satisfying all the necessary axioms, see \cite{K}.

\begin{proposition}\prlabel{2.4}
Let $(A, R)$ be an algebra with a canonical R-matrix. Then the
functor $F:\ \Mm_k\to {}_A\Mm_A$ preserves the symmetry.
\end{proposition}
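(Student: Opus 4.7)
The statement "$F$ preserves the symmetry" means that the diagram
$$\xymatrix{
F(N)\ot_A F(N')\ar[rr]^{c_{F(N),F(N')}}\ar[d]_{\varphi_{N,N'}}&& F(N')\ot_A F(N)\ar[d]^{\varphi_{N',N}}\\
F(N\ot N')\ar[rr]^{F(\tau_{N,N'})}&& F(N'\ot N)}$$
commutes for all $k$-modules $N,N'$, where $\tau$ is the usual switch on $\Mm_k$ (which is the symmetry there) and $c$ is the braiding on ${}_A\Mm_A$ determined by $R$, which is a symmetry by \thref{2.1}. My plan is simply to compute both composites on a generator $(a\ot n)\ot_A (a'\ot n')$ of $(A\ot N)\ot_A (A\ot N')$ and reduce the equality to a single identity in $A$.

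The top-then-right composite, using the formula \equref{1.1.1} for $c$ and the explicit description of $\varphi_{N',N}$, sends $(a\ot n)\ot_A(a'\ot n')$ first to $R^1(a'\ot n')R^2\ot_A(a\ot n)R^3=R^1a'R^2\ot n'\ot_A aR^3\ot n$ and then to $R^1a'R^2\cdot aR^3\ot n'\ot n$. The left-then-bottom composite simply yields $aa'\ot n'\ot n$. Thus the whole question reduces to verifying the scalar identity
\begin{equation*}
R^1a'R^2aR^3=aa'\quad\text{for all }a,a'\in A.
\end{equation*}

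To prove this identity I would apply the centralizing condition \equref{1.1.2} to migrate the factor $a$ from the third tensor slot to the first: reading \equref{1.1.2} as $R^1\ot a'R^2\ot aR^3=R^1a\ot a'R^2\ot R^3$ (left multiplying the middle component of both sides by $a'$) and then multiplying the three tensorands together gives $R^1a'R^2aR^3=R^1aa'R^2R^3$. Finally, the normalizing condition \equref{2.1.4}, which states $R^1\ot R^2R^3=1\ot 1$, yields after left multiplication of the second factor by $aa'$ that $R^1\ot(aa')R^2R^3=1\ot aa'$; multiplying the two tensorands we obtain $R^1(aa')R^2R^3=aa'$, as required. The only mildly delicate point is justifying the step $R^1xR^2R^3=x$ for arbitrary $x\in A$, but this is immediate from the tensor identity \equref{2.1.4} by left-multiplying the second tensorand by $x$ before multiplying out; no further obstacle arises.
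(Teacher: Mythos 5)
Your proof is correct and follows essentially the same route as the paper: both evaluate the two composites on generators $(a\ot n)\ot_A(a'\ot n')$, reduce commutativity of the square to the single identity $R^1a'R^2aR^3=aa'$ in $A$, and settle it with the centralizing condition \equref{1.1.2} plus a normalizing condition. The only (immaterial) difference is that you shift $a$ into the first slot and finish with \equref{2.1.4}, whereas the paper shifts $a'$ into the third slot and finishes with \equref{2.1.2}.
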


\begin{proof}
We have to show that the following diagram commutes
$$\xymatrix{(A\ot N)\ot_A(A\ot N')\ar[d]_{c_{A\ot N,A\ot N'}}\ar[rr]^{\varphi_{N,N'}}&&
A\ot N\ot N'\ar[d]^{A\ot\tau_{N,N'}}\\
(A\ot N')\ot_A(A\ot N)\ar[rr]^{\varphi_{N',N}}&&A\ot N'\ot N}$$
Here $\tau_{N,N'}:\ N\ot N'\to N'\ot N$ is the usual switch map.
For $a, b\in A$, $n\in N$ and $n'\in N'$, we compute
\begin{eqnarray*}
&&\hspace*{-2cm} (\varphi_{N',N}\circ c_{A\ot N,A\ot N'})((a\ot
n)\ot_A(b\ot n')) \, \equal{\equref{1.1.1}} \, \varphi_{N',N}\bigl((R^1bR^2\ot n')\ot_A aR^3\ot n\bigr)\\
&=& R^1bR^2 aR^3\ot n'\ot n \, \equal{\equref{1.1.2}} \,
R^1R^2 abR^3\ot n'\ot n\\
&\equal{\equref{2.1.2}}& ab\ot n'\ot n = ab \ot \tau_{N,N'}(n\ot n')\\
&=&((A\ot \tau_{N,N'})\circ \varphi_{N,N'})((a\ot n)\ot_A(b\ot
n'))
\end{eqnarray*}
and the proof is finished.
\end{proof}

If $A$ is an Azumaya algebra, then it follows from \thref{2.2}
that we have a symmetry on the category of $A$-bimodules
${}_A\Mm_A$. In Examples \ref{ex:2.5} and  \ref{ex:2.6}, we give
explicit formulas for $R$ in the case where $A$ is a matrix ring
or a quaternion algebra; in both cases $A$ is free, so that the
canonical $R$-matrix is unique.

\begin{example}\exlabel{2.5}
Let $A = M_n(k)$ be a matrix algebra. Then the $R$-matrix for
$M_n(k)$ if given by
$$
R = \sum_{i,j,k=1}^n e_{ij}\ot e_{ki}\ot e_{jk}
$$
where $e_{ij}$ is the elementary matrix with $1$ in the
$(i,j)$-position and $0$ elsewhere. Indeed, for all indices $i$,
$j$, $p$, $q$, we have
$$e_{pq}(\sum_{k=1}^n e_{ki}\ot e_{jk})=e_{pi}\ot e_{jq}= (\sum_{k=1}^n e_{ki}\ot e_{jk})e_{pq},$$
hence $\sum_{k=1}^n e_{ki}\ot e_{jk}\in (A\ot A)^A$ and $R =
\sum_{i,j=1}^n e_{ij}\ot(\sum_{k=1}^n e_{ki}\ot e_{jk}) \in A\ot
(A\ot A)^A$. Finally
$$
\sum_{i,j,k=1}^n e_{ij} e_{ki}\ot e_{jk}=\sum_{i,j=1^n} e_{ii}\ot
e_{jj}=1\ot 1
$$
as needed.
\end{example}

\begin{example}\exlabel{2.6}
Let $K$ be a commutative ring, such that $2$ is invertible in $K$,
and take two invertible elements $a,b\in K$. The generalized
quaternion algebra $A={}^aK^b$ is the free $K$-module with basis
$\{1,i,j,k\}$ and multiplication defined by
$$i^{2} = a  , \qquad j^{2} = b, \qquad ij = -ji = k.$$
It is well-known that $A$ is an Azumaya algebra. The corresponding $R$-matrix is
\begin{eqnarray*}
&&\hspace*{-10mm}
R={1\over 4}(1\ot 1\ot 1)
+ {1\over 4a}(1\ot i\ot i+i\ot 1\ot i +i\ot i\ot 1)\\
&+& {1\over 4b}(1\ot j\ot j+j\ot 1\ot j +j\ot j\ot 1)
- {1\over 4ab}(1\ot k\ot k+k\ot 1\ot k +k\ot k\ot 1)\\
&+& {1\over 4ab}(i\ot j\ot k +j\ot k\ot i + k\ot i\ot j)
- {1\over 4ab}(j\ot i\ot k +k\ot j\ot i + i\ot k\ot j).
\end{eqnarray*}
It is easy to show that $R$ satisfies \equref{1.1.4} and \equref{2.1.2}. Indeed,
\begin{eqnarray*}
&&\hspace*{-10mm}
R^1R^2\ot R^3=
{1\over 4}(1\ot 1)
+ {1\over 4a}(i\ot i+i\ot i +a\ot 1)\\
&+& {1\over 4b}( j\ot j+j\ot j +b\ot 1)
- {1\over 4ab}(k\ot k+k\ot k -ab\ot 1)\\
&+& {1\over 4ab}(k\ot k -bi\ot i -a j\ot j)
+ {1\over 4ab}(k\ot k -bi\ot i -aj\ot j)=1\ot 1,
\end{eqnarray*}
proving the first normalization from \equref{2.1.2}; the second
one follows in a similar manner. An elementary computation shows
that $R^1\ot xR^2\ot R^3=R^1\ot R^2\ot R^3x$, for $x=i,j,k$, and
this proves \equref{1.1.4}.
\end{example}

\begin{example}\exlabel{2.7}
Any Azumaya algebra $A$ admits a canonical R-matrix. The converse is not
true: it suffices to consider $\QQ$ as a $\ZZ$-algebra. Since $\ZZ
\subset \QQ$ is an epimorphism of rings, it follows from
\prref{1.2} that $(\QQ, \, 1\ot 1\ot 1)$ is braided and it is
obvious that $\QQ$ is not a $\ZZ$-Azumaya algebra.
\end{example}

\begin{example}\exlabel{2.9}
Let $(A, R)$, $(B, S)$ be two algebras with a canonical R-matrix. It is
straightforward to show that $(A\ot B, T)$, with $T := R^1 \ot S^1
\ot R^2 \ot S^2 \ot R^3 \ot S^3 \in (A \ot B)^{(3)}$, is an algebra with a
canonical R-matrix.
\end{example}

We conclude this paper with another application of canonical R-matrices:
they can be applied to deform the switch map into a simultaneous solution
of the quantum Yang-Baxter equation and the braid equation.

\begin{theorem}\thlabel{qybe}
Let $(A, R)$ be an algebra with a canonical R-matrix and $V$ an
$A$-bimodule. Then the map
$$
\Omega:  V \ot V \rightarrow V \ot V, \qquad  \Omega (v \ot w) =
R^{1} \, w R^{2} \ot R^{3} \, v
$$
is a solution of the quantum Yang-Baxter equation $\Omega^{12} \,
\Omega^{13} \, \Omega^{23} = \Omega^{23} \, \Omega^{13} \,
\Omega^{12}$ and of the braid equation $\Omega^{12} \, \Omega^{23}
\, \Omega^{12} = \Omega^{23} \, \Omega^{12} \, \Omega^{23}$.
Moreover $\Omega^3 = \Omega$ in $\End (V^{(2)})$.
\end{theorem}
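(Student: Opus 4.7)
The plan is to prove all three assertions by direct computation in $V\ot V$ and $V\ot V\ot V$, using the arsenal of identities satisfied by a canonical R-matrix: the centralizing conditions \equref{1.1.2}--\equref{1.1.4}, the cyclic invariance \equref{2.1.3}, the three normalizing identities in \equref{2.1.2} and \equref{2.1.4}, and the quartic identities \equref{1.1.5}--\equref{1.1.6}, which by \reref{1.1.b} read $R^{124}=R^{123}R^{134}$ and $R^{134}=R^{124}R^{234}$ in $A^{(4)}$. The operational principle is to compute both sides of each equation, factor out the pure $A$-coefficient of the resulting tensor in the placeholders $u,v,w$, and verify the resulting purely algebraic equality inside $A^{(4)}$ or $A^{(6)}$.

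I would begin with $\Omega^{3}=\Omega$. A direct calculation using three independent copies $R$, $r$, $s$ of the R-matrix yields
$$\Omega^{3}(v\ot w) = s^1 r^3 R^1\, w\, R^2 s^2 \ot s^3 r^1 R^3\, v\, r^2,$$
so the required identity reduces to $s^1r^3R^1 \ot R^2s^2 \ot s^3r^1R^3 \ot r^2 = R^1\ot R^2\ot R^3\ot 1$ in $A^{(4)}$. This will follow by first using \equref{1.1.4} together with the cyclic form of $R$ to disentangle the three copies, and then collapsing the $s$- and $r$-copies successively via the normalizing identities \equref{2.1.2} and \equref{2.1.4}. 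For the braid equation I would expand $\Omega^{12}\Omega^{23}\Omega^{12}$ and $\Omega^{23}\Omega^{12}\Omega^{23}$ applied to $u\ot v\ot w$ and bring the two resulting expressions to a common normal form; here the $A^{(4)}$-identity \equref{1.1.6} is the key structural ingredient. The QYBE $\Omega^{12}\Omega^{13}\Omega^{23}=\Omega^{23}\Omega^{13}\Omega^{12}$ is handled analogously, now with \equref{1.1.5} together with cyclic invariance providing the bridge, since $\Omega^{13}$ must commute transparently with the untouched middle slot.

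The main obstacle is the sheer bookkeeping rather than any single conceptual difficulty: with three copies of $R$ simultaneously in play and three bimodule placeholders interleaved with nine $A$-entries, one has to be very systematic about tracking positions. The cleanest strategy is exactly the one indicated, namely to defer any use of the bimodule actions until after the compositions have been written out in full, and only then to reduce the entire problem to an equality of tensors in $A^{(4)}$ or $A^{(6)}$ that can be attacked using purely the identities \equref{1.1.4}--\equref{1.1.6} and cyclic invariance. Once this reduction is made, each of the three equations collapses to a short algebraic manipulation whose last step is an application of the normalizing conditions \equref{2.1.2} and \equref{2.1.4} to annihilate the redundant R-matrix copies.
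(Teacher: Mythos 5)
Your overall strategy is exactly the paper's: expand each composite on elements using independent copies of $R$, view the result as the image of a pure coefficient tensor in $A^{(4)}$ (for $\Omega^3$) or $A^{(6)}$ (for the three-strand equations), and verify the coefficient identity from the R-matrix axioms. Your expansion $\Omega^3(v\ot w)=s^1r^3R^1\,w\,R^2s^2\ot s^3r^1R^3\,v\,r^2$ and the sufficient identity $s^1r^3R^1\ot R^2s^2\ot s^3r^1R^3\ot r^2=R^1\ot R^2\ot R^3\ot 1$ are both correct, and that identity does follow essentially as you indicate: slide $r^3R^1$ and then $R^2$ through the $s$-copy by \equref{1.1.2} and \equref{1.1.4}, slide $r^3$ through the $R$-copy by \equref{1.1.3}, then kill the $r$- and $R$-copies by \equref{2.1.2} and \equref{2.1.4}. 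This is the paper's own computation, phrased at coefficient level.

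The gap is in the two main equations: for the QYBE and the braid equation your proposal stops at intentions, and the one concrete pointer you give is off target. The identities \equref{1.1.5}--\equref{1.1.6} play no role here, and you do not indicate how they would act on the six-fold coefficient tensors; what actually happens (and what the paper does) is simpler. After expansion, each of the four composites collapses in two or three moves to the same normal form $r^1\,t\,r^2\ot R^1\,w\,R^2\ot r^3R^3\,v$ using only the centralizing conditions \equref{1.1.2}--\equref{1.1.3} (to move a component of one copy past another copy) and the normalizing condition \equref{2.1.2} (to annihilate the third copy via $S^1S^2\ot S^3=1\ot 1$). For instance $\Omega^{12}\Omega^{23}\Omega^{12}(v\ot w\ot t)=S^1r^1\,t\,r^2S^2\ot S^3R^1\,w\,R^2\ot r^3R^3\,v$, and applying \equref{1.1.3} to the $r$-copy with $a=S^1$ followed by \equref{2.1.2} for $S$ already gives the normal form; the other side needs one extra application of \equref{1.1.2}. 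So nothing you assert is false (\equref{1.1.5}--\equref{1.1.6} are consequences of the axioms), but as written the proofs of the QYBE and braid equations are missing, and the roadmap through the quartic identities is a needlessly complicated detour; you should either carry out the normal-form reductions explicitly or replace that appeal by the centralizing and normalizing conditions that actually do the work.
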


\begin{proof} Let $r = S = R$. Then for any $v$, $w$, $t \in V$ we have:
\begin{eqnarray*}
&&\hspace*{-15mm} \Omega^{12} \, \Omega^{13} \, \Omega^{23}(v \ot
w \ot t) = \Omega^{12} \, \Omega^{13}(v \ot R^{1} \, t \, R^{2}
\ot R^{3} \,
w)\\
&{=}& \Omega^{12}(r^{1} \, R^{3} \, w \, r^{2} \ot R^{1} \, t \,
R^{2} \ot r^{3} \, v) = S^{1} \, R^{1} \, t \, R^{2} \, S^{2} \ot
S^{3} \, r^{1} \,
R^{3} \, w \, r^{2} \ot r^{3} \, v\\
&\equal{\equref{1.1.3}}& R^{1} \, t \, R^{2} \, S^{1} \, S^{2} \ot
S^{3} \, r^{1} \, R^{3} \, w \, r^{2} \ot r^{3} \, v \,
\equal{\equref{2.1.2}} \, R^{1} \, t \, R^{2} \ot
r^{1} \, R^{3} \, w \, r^{2} \ot r^{3} \, v\\
&\equal{\equref{1.1.2}}& R^{1} \, t \, R^{2} \ot r^{1} \, w \,
r^{2} \ot R^{3} \, r^{3} \, v
\end{eqnarray*}
and
\begin{eqnarray*}
&&\hspace*{-15mm} \Omega^{23} \, \Omega^{13} \, \Omega^{12}(v \ot
w \ot t) = \Omega^{23} \, \Omega^{13} (R^{1} \, w \, R^{2} \ot
R^{3} \, v \ot
t)\\
&{=}& \Omega^{23} (r^{1} \, t \, r^{2} \ot R^{3} \, v \ot r^{3} \,
R^{1} \, w \, R)^{2}) = r^{1} \, t \, r^{2} \ot S^{1} \, r^{3} \,
R^{1} \, w \, R^{2} \,
S^{2} \ot S^{3} \,  R^{3} \, v\\
&\equal{\equref{1.1.2}}& r^{1} \, t \, r^{2} \ot S^{1} \, R^{1} \,
w \, R^{2} \, S^{2} \ot r^{3} \, S^{3} \,  R^{3} \, v
\, \equal{\equref{1.1.3}} \, r^{1} \, t \, r^{2} \ot R^{1} \, w \, R^{2} \, S^{1} \, S^{2} \ot r^{3} \, S^{3} \,  R^{3} \, v\\
&\equal{\equref{2.1.2}}& r^{1} \, t \, r^{2} \ot R^{1} \, w \,
R^{2} \ot r^{3} \,  R^{3} \, v,
\end{eqnarray*}
Hence $\Omega$ is a solution of the quantum Yang-Baxter equation.
On the other hand:
\begin{eqnarray*}
&&\hspace*{-15mm} \Omega^{12} \, \Omega^{23} \, \Omega^{12}(v \ot
w \ot t) =
\Omega^{12} \, \Omega^{23} (R^1 \, w \, R^2 \ot R^{3} \, v \ot t )\\
&{=}& \Omega^{12} (R^1 \, w \, R^2 \ot r^1 \, t \, r^2 \ot r^{3}
\, R^3 v) = S^{1} \, r^{1} \, t \, r^{2} \, S^{2} \ot S^{3} \,
R^{1} \, w \, R^2 \ot r^{3} \, R^3 \, v\\
&\equal{\equref{1.1.3}}& r^{1} \, t \, r^{2} \, S^{1} \,  S^{2}
\ot S^{3} \, R^{1} \, w \, R^2 \ot r^{3} \, R^3 \, v \,
\equal{\equref{2.1.2}} \,  r^{1} \, t \, r^{2} \ot R^{1} \, w \,
R^2 \ot r^{3} \, R^3 \, v
\end{eqnarray*}
and
\begin{eqnarray*}
&&\hspace*{-15mm} \Omega^{23} \, \Omega^{12} \, \Omega^{23} (v \ot
w \ot t) = \Omega^{23} \, \Omega^{12} ( v\ot r^{1} \, t \, r^{2}
\ot r^{3} \, w )\\
&{=}& \Omega^{23} (S^{1} \, r^1 \, t \, r^{2} \, S^2 \ot S^{3} \,
v \ot r^{3} \, w ) = S^{1} \, r^1 \, t \, r^{2} \, S^2 \ot R^{1}
\, r^{3} \, w\,  R^{2} \ot R^3 \, S^3 v\\
&\equal{\equref{1.1.3}}& r^1 \, t \, r^{2} \, S^{1} \, S^2 \ot
R^{1} \, r^{3} \, w\,  R^{2} \ot R^3 \, S^3 v \,
\equal{\equref{2.1.2}} \, r^1 \, t \, r^{2} \ot R^{1} \, r^{3} \,
w\,
R^{2} \ot R^3 \, v \\
&\equal{\equref{1.1.2}}& r^1 \, t \, r^{2} \ot R^{1} \, w\, R^{2}
\ot  r^{3} \, R^3 \,  v.
\end{eqnarray*}
Thus $\Omega$ is also a solution of the braid equation. Finally,
\begin{eqnarray*}
&&\hspace*{-15mm} \Omega^{3} (v \ot w) = S^1 \, r^3 \, R^1 \, w \,
R^2 \, S^2 \ot S^3
\, r^1 \, R^3 \, v \, r^2 \\
&\equal{\equref{1.1.2}}& S^1 \, w \, R^2 \, S^2 \ot r^3 \, R^1 \,
S^3 \, r^1 \, R^3 \, v \, r^2 \, \equal{\equref{2.1.4}} \, S^1 \,
w \, S^2 \ot r^3 \, R^1 \, S^3 \, R^2 \, r^1 \, R^3 \, v \, r^2 \\
&\equal{\equref{1.1.3}}& S^1 \, w \, S^2 \ot R^1 \, S^3 \, R^2 \,
r^3 \, r^1 \, R^3 \, v \, r^2 \, \equal{\equref{2.1.2}} \, S^1 \,
w \, S^2 \ot R^1 \, S^3 \, R^2 \, R^3 \, v \\
&\equal{\equref{2.1.3}}& S^1 \, w \, S^2 \ot  S^3 \, v \, \, = \,
\, \Omega (v \ot w).
\end{eqnarray*}
\end{proof}

\end{document}